\newcommand{\rt}{\rightarrow}
\newcommand{\lrt}{\longrightarrow}
\newcommand{\va}{\varphi}
\newcommand{\st}{\stackrel}
\newcommand{\al}{\alpha}
\newcommand{\La}{\Lambda}
\newcommand{\Om}{\Omega}
\newcommand{\Z}{\mathbb{Z}}
\newcommand{\SA}{\mathscr{A}}
\newcommand{\SD}{\mathscr{D}}
\newcommand{\SG}{\mathscr{G}}
\newcommand{\SM}{\mathscr{M}}
\newcommand{\SX}{\mathscr{X}}
\newcommand{\SY}{\mathscr{Y}}
\newcommand{\SW}{\mathscr{W}}
\newcommand{\CA}{\mathcal{A} }
\newcommand{\CX}{\mathcal{X} }
\newcommand{\Mod}{{\rm{Mod\mbox{-}}}}
\newcommand{\mmod}{{\rm{{mod\mbox{-}}}}}
\newcommand{\prj}{{\rm{prj}\mbox{-}}}
\newcommand{\Ker}{{\rm{Ker}}}
\newcommand{\rad}{{\rm{rad}}}
\newcommand{\Ext}{{\rm{Ext}}}
\theoremstyle{plain}
\newtheorem{theorem}{Theorem}[section]
\newtheorem{corollary}[theorem]{Corollary}
\newtheorem{proposition}[theorem]{Proposition}
\theoremstyle{definition}
\newtheorem{definition}[theorem]{Definition}
\newtheorem{remark}[theorem]{Remark}
\theoremstyle{plain}
\theoremstyle{definition}
\numberwithin{equation}{section}
\begin{document}

\title[Higher Cotorsion Classes]{Cotorsion Classes in Higher Homological Algebra}

\author[J. Asadollahi, A. Mehregan and S. Sadeghi]{Javad Asadollahi, Azadeh Mehregan and Somayeh Sadeghi}

\address{Department of Pure Mathematics, Faculty of Mathematics and Statistics, University of Isfahan, P.O.Box: 81746-73441, Isfahan, Iran}
\email{asadollahi@sci.ui.ac.ir, asadollahi@ipm.ir }
\email{so.sadeghi@sci.ui.ac.ir }
\email{azadeh.mehregan@sci.ui.ac.ir }

\subjclass[2010]{18G99, 18E10, 18G25}

\keywords{Cluster tilting subcategories, cotorsion classes, higher homological algebra}

\begin{abstract}
In this note, the notion of cotorsion classes is introduced into the higher homological algebra. Our results motivate the definition, showing that this notion of $n$-cotorsion classes satisfies usual properties one could expect. In particular, a higher version of Wakamatsu's Lemma is proved. Connections with wide subcategories are also studied.
\end{abstract}

\maketitle

%\tableofcontents

\section{Introduction}
Let $n \geq 1$ be a fixed integer. Higher homological algebra, known also as $n$-homological algebra, is a recently discovered generalisation of homological algebra, having sequences of length $n+2$ playing the role of the short exact sequences in ($1$-)abelian categories. It appeared in a series of papers by Iyama \cite{I1, I2, I3} and then axiomatized and studied extensively by Jasso \cite{Ja}.

Investigating possible definitions of various notions of classical homological algebra in the context of higher homological algebra is an active research area, see for instance, \cite{Jor}, \cite{HJV}, \cite{JJ} and \cite{EN}, where notions such as torsion classes, wide subcategories, abelian quotients of triangulated categories and Auslander's Formula are studied in the higher context, respectively.

A notion of interest in classical homological algebra is the concept of a cotorsion theory, introduced in \cite{S}. Let $\SA$ be an abelian category with enough projective and enough injective objects. A cotorsion pair is a pair $(\SX, \SY)$ of full subcategories of $\CA$  such that
\[\SX^{\perp_1}=\SY  \ \ \ {\rm and} \ \ \ \SX={}^{\perp_1}\SY,\]
where orthogonals are taken with respect to the functor $\Ext^1$. $\SX$ is then called a cotorsion class. According to their applications, cotorsion pairs have been studied extensively in the literature, see e.g. \cite{X}, \cite{BR}, \cite{EJ} and \cite{GT}.

Our aim in this paper is to investigate a higher version of the notion of cotorsion classes.  This investigation is interesting in its own right and also may shed some light in some problems in higher homological algebra, in particular, in higher tilting theory.

We introduce the notion of $n$-cotorsion class $\SX$ of an $n$-cluster titling subcategory $\SM$ of an abelian category $\SA$. We are mainly interested in the case where $\SM$ is an $n$-cluster tilting subcategory of $\mmod\La$, where $\La$ is an artin algebra, that is, in the case where $(\La, \SM)$ is an $n$-homological pair \cite[Definition 2.5]{HJV}.

It is immediate that every cotorsion class $\SX$ is closed under finite sums, summands and contains projective objects. Moreover, cotorsion classes have tie connections with the notion of special precovering classes. We show that $n$-cotorsion classes also enjoy similar properties, i.e. are closed under sums, summands and contain projectives and also are related with the $n$-special precovering classes, see Theorem \ref{NspecialNcotor}.

A full subcategory $\SX$ of an $n$-cluster tilting subcategory $\SM$ of an abelian category $\SA$ is called an $n$-special precovering class if for every $m\in\SM$, there exists an $n$-exact sequence \[0 \lrt r_n\lrt \cdots\lrt r_1\lrt x\lrt m\lrt 0\] in $\SM$ such that $x\in\SX$ and for every $x' \in \SX$ the induced sequence
\[ 0 \lrt \Ext^n_{\SA}(x', r_n) \lrt \Ext^n_{\SA}(x', r_{n-1}) \lrt \cdots \lrt \Ext^n_\SA(x', r_1) \lrt 0\]
of abelian groups is exact, see Definition \ref{n-special precover}.

Wakamatsu's Lemma \cite[Lemma 2.1.13]{GT}, which is one of the pillars in the classical homological algebra, states that if $\varphi: x \lrt a$ in an abelian category $\SA$, is an $\SX$-cover, then $\Ker\varphi \in \SX^{{\perp}_1}$, provided $\SX$ is closed under extensions, see next section for the definition of an $\SX$-cover. Here using the notion of $n$-cotorsion classes we provide a higher version of Wakamatsu's Lemma in higher homological algebra, see Theorem \ref{Wakamatsu}.

In the last section of the paper, using the concept of wide subcategories \cite{HJV}, we study $n$-cotorsion classes. It will be shown that if $\SW$ is a wide subcategory of an $n\Z$-cluster tilting subcategory $\SM$ containing projective modules and enjoying the property that the inclusion functor $\SW\st{i} \lrt \SM$ admits a right adjoint, then $\SW$ is an $n$-cotorsion class of $\SM$, see Theorem \ref{Wide}.

\section{Preliminaries}
Throughout the paper, $\La$ is an Artin algebra and $\mmod\La$ is the category of finitely generated right $\La$-modules.

Let $\SA$ be an additive category and $\SX$ be a subclass of $\SA$. Let $a$ be an object of $\SA$. An $\SX$-precover of $a$ is a morphism $x\st{\varphi}\lrt a$ in $\SA$ with $x\in\SX$ such that any other morphism $x'\lrt a$ with $x'\in\SX$ factors through $\varphi$. If every object in $\SA$ admits an $\SX$-precover, then $\SX$ is called a precovering class of $\SA$. An $\SX$-precover $x\st{\varphi}\lrt a$ is called an $\SX$-cover if every endomorphism $\psi: x\lrt x$  with $\varphi\psi=\varphi$ is an automorphism. $\SX$ is called covering if every $a\in\SA$ admits an $\SX$-cover. $\SX$-preenvelopes, preenveloping and enveloping classes are defined dually.

Note that there are other terminologies for the above concepts, used mostly in representation theory of algebras: for an object $a$ of $\SA$, $\SX$-precover of $a$, resp. $\SX$-preenvelope of $a$, is called right $\SX$-approximation, resp. left $\SX$-approximation, of $a$. Moreover, precovering classes, resp. preenveloping classes, are called contravariantly finite, resp. covariantly finite, subcategories of $\SA$. In this context, a subcategory which is both contravariantly finite and covariantly finite (or equivalently, precovering and preenveloping) is called functorially finite.

A subcategory $\SG$ of $\SA$ is called a generating subcategory if for every object $a \in \SA$, there exists an epimorphism $m \lrt a$ with $m \in \SG$.
Cogenerating subcategories are defined dually. $\SG$ is called a generating-cogenerating subcategory of $\SA$ if it is both a generating and a cogenerating subcategory.

\s{\sc $n$-Cluster tilting subcategories.} Let $\SA$ be an abelian category and $n\geq 1$ be a fixed integer. An additive subcategory $\SM$ of $\SA$ is called an $n$-cluster tilting subcategory if it is a functorially finite and generating-cogenerating subcategory of $\SA$ satisfying $\SM^{\perp_n}=\SM= {}^{\perp_n}\SM$, where
\[\SM^{\perp_n}:= \{a \in \SA \mid \Ext^i_{\SA}(\SM, a)=0, \ \text{ for all} \ 0 < i<  n \}, \]
\[{}^{\perp_n}\SM:= \{a \in \SA \mid \Ext^i_{\SA}(a, \SM)=0, \ \text{ for all} \ 0 < i <  n \}.\]

When $\SA=\mmod\La$, and $\SM$ is an $n$-cluster tilting subcategory of $\SA$, the pair $(\La, \SM)$ is called an $n$-homological pair \cite[Definition 2.5]{HJV}.

Cluster tilting subcategories are defined by Iyama in \cite[Definition 2.2]{I1} and studied further in \cite{I3}; see also \cite[Definition 3.14]{Ja}. It is known that $\SM$ has a structure of an $n$-abelian category  \cite[Theorem 3.16]{Ja}.

{\s{\sc $n$-Abelian categories}} {(see \cite[\S 3]{Ja})}.
Let $\SM$ be an additive category. Let $u_{n+1} : m_{n+1} \lrt m_{n}$  be a morphism in $\SM$. An $n$-cokernel of $u_{n+1}$  is a sequence
\[m_n \st{ u_n }{\lrt } m_{n-1} \lrt \cdots \lrt m_{1}\st{ u_{1} }{ \lrt } m_{0}\]
of morphisms in $\SM$ such that for every $m \in \SM$, the induced sequence
\[0 \lrt \SM(m_0, m) {\lrt} \cdots {\lrt} \SM(m_n, m) {\lrt} \SM(m_{n+1}, m)\]
of abelian groups is exact. $n$-cokernel of $u_{n+1}$ denotes by $(u_n, u_{n-1}, \cdots, u_{1})$. The notion of $n$-kernel of a morphism $u_1:m_1 \lrt m_{0}$ is defined similarly, or rather dually.

A sequence $m_{n+1} \st{u_{n+1} }{\lrt } m_n \lrt  \cdots \lrt m_1 \st{u_1}{\lrt} m_{0}$ of objects and morphisms in $\SM$ is called $n$-exact \cite[Definitions 2.2, 2.4]{Ja} if $(u_{n+1}, u_{n}, \cdots, u_{2})$ is an $n$-kernel of $u_1$ and $(u_n, u_{n-1}, \cdots, u_{1})$ is an $n$-cokernel of $u_{n+1}$. An $n$-exact sequence like the above one, will be denoted by
\[0 \lrt m_{n+1} \st{ u_{n+1} }{ \lrt }m_n \lrt  \cdots \lrt m_1 \st{ u_1 }{\lrt } m_{0} \lrt 0.\]

The additive category $\SM$ is called $n$-abelian \cite[Definition 3.1]{Ja} if it is idempotent complete, each morphism in $\SM$ admits an $n$-cokernel and an $n$-kernel and every monomorphism $u_{n+1}: m_{n+1} {\lrt } m_n$, respectively every epimorphism $u_1: m_1 { \lrt } m_{0}$, can be completed to an $n$-exact sequence
\[0 \lrt m_{n+1} \st{ u_{n+1} }{ \lrt }m_n \lrt  \cdots \lrt m_1 \st{ u_1 }{\lrt } m_{0} \lrt 0.\]

\s Let $\SM$ be an $n$-abelian category and
\[\eta: 0 \lrt m_{n+1} \st{f_{n+1}}\lrt m_n \lrt \cdots \lrt m_2 \st{f_2}\lrt m_1 \st{f_1} \lrt m_0 \lrt 0\]
be an $n$-exact sequence in $\SM$. By \cite[Proposition 2.6]{Ja}, $f_{n+1}$ is a split monomorphism if and only if $f_1$ is a split epimorphism and these are equivalent to the fact that the identity morphism on $\eta$ is null-homotopic. In this case, we say that $\eta$ is a contractible (or split) $n$-exact sequence.

\s Projective and injective objects in an $n$-abelian category $\SM$ are defined in the usual sense. For instance,  $p\in\SM$ is called projective \cite[Definition 3.11]{Ja} if for every epimorphism $f: a\rt b$, the sequence $\SM(p, a)\lrt\SM(p, b)\lrt 0$ is exact. This type of projective objects are called `categorically projective' in
\cite[Definition 4.6]{HJV}. The notion of an injective object is defined dually.

\s By \cite[Definition 2.22]{IJ}, we say that an $n$-abelian category $\SM$ has $n$-syzygies if for every $m \in \SM$ there exists an $n$-exact sequence
\[0\rt  k \lrt   p_n \lrt \cdots \lrt  p_1 \lrt m \lrt 0\]
in $\SM$, such that $p_i$, for $i \in \{1,  \cdots, n\}$, is a projective object. $k$ is then called an $n$-syzygy of $m$ and, by abuse of notation, is denoted by $\Om_nm$. The notion of $n$-cosyzygies and $\SM$ having $n$-cosyzygies are defined dually. The $n$-cosyzygy of $m$ is denoted by $\Om_{-n}m$.

\s {\sc $n$-Pushout diagrams.} Let $\SM$ be an $n$-abelian category. Let 
\[\eta: m_{n+1} \lrt m_n \lrt \cdots \lrt m_1\]
be a complex in $\SM$. An $n$-pushout of $\eta$ along a morphism $f: m_{n+1} \lrt m'_{n+1}$ in $\SM$, is a diagram
\begin{equation*}
\begin{tikzcd}
\eta: & m_{n+1} \rar \dar{f} & m_n \rar \dar & \cdots \rar & m_2 \rar \dar & m_1 \dar\\
& m'_{n+1}  \rar & m'_n \rar & \cdots \rar & m'_2 \rar & m'_1
\end{tikzcd}
\end{equation*}
such that $m'_i \in \SM$, for all $i \in \{1, 2, \cdots, n\}$, and in the mapping cone
\[ m_{n+1} \st{\al}\lrt m_n\oplus m'_{n+1} \st{\al_n}\lrt m_{n-1}\oplus m'_n \st{\al_{n-1}}\lrt \cdots \lrt m_1\oplus m'_2 \st{\al_1}\lrt m'_1   \] 
the sequence $(\al_n, \al_{n-1}, \cdots, \al_1)$ is an $n$-cokernel of $\al$ \cite[Definition 2.11]{Ja}. The notion of $n$-pullback is defined dually. $n$-pushouts and $n$-pullbacks in $\SM$ always exist. Moreover, if $\eta$ completes to an $n$-exact sequence such as
$0 \lrt m_{n+1} \lrt m_n \lrt \cdots \lrt m_1 \lrt m_0 \lrt 0$, then the $n$-pushout diagram also completes to the following diagram
\begin{equation*}
\begin{tikzcd}
0 \rar  & m_{n+1} \rar \dar{f} & m_n \rar \dar & \cdots \rar & m_2 \rar \dar & m_1 \rar \dar & m_0 \rar \dar[equals] & 0 \\
0 \rar & m'_{n+1}  \rar & m'_n \rar & \cdots \rar & m'_2 \rar & m'_1 \rar & m_0 \rar & 0
\end{tikzcd}
\end{equation*}
in which the lower row is also an $n$-exact sequence in $\SM$, see \cite[Theorem 3.8]{Ja}. 

\s {\sc Morphisms of $\Ext^n$ groups in $n$-cluster tilting subcategories.}
Let $(\La, \SM)$ be an $n$-homological pair. Let $m$ and $m'$ be objects of $\SM$. By \cite[\S IV.9]{HS}, every element $\eta$ of $\Ext^n_{\La}(m, m')$ is represented by an exact sequence
\[\eta: 0 \lrt m' \lrt m_n \lrt m_{n-1} \lrt \cdots \lrt m_1 \lrt m \lrt 0\] 
in $\mmod\La$. Since $\SM$ is an $n$-cluster tilting subcategory of $\mmod\La$, by \cite[Appendix A]{I1}, one can assume that all the middle terms $m_i$ also are in $\SM$. Now let $f: m' \lrt m''$ be a morphism in $\SM$. For every element $m \in \SM$, the morphism \[\hat{f}=\Ext^n(m, f): \Ext^n_{\La}(m, m') \lrt \Ext^n_{\La}(m, m'')\] can be interpreted in terms of $n$-pushout of $n$-exact sequences in $\SM$. That is, if \[\eta: \ 0 \lrt m' \lrt m_n \lrt m_{n-1} \lrt \cdots \lrt m_1 \lrt m \lrt 0\] is an element of $\Ext^n_{\La}(m, m')$, then $\hat{f}(\eta)=\eta' \in \Ext^n_{\La}(m, m'')$ is obtained by extending $n$-pushout of $\eta$ along $f$, as depicted in the following diagram
\begin{equation*}
\begin{tikzcd}
\eta: \ \ 0 \rar  & m' \rar \dar{f} & m_n \rar \dar & \cdots \rar & m_2 \rar \dar & m_1 \rar \dar & m \rar \dar[equals] & 0 \\
\eta': \ \ 0 \rar & m''  \rar & m'_n \rar & \cdots \rar & m'_2 \rar & m'_1 \rar & m \rar & 0
\end{tikzcd}
\end{equation*}
To see this one should follow similar argument as in the \cite[pp. 148-150]{HS} and then apply Theorem IV.9.1 of \cite{HS}. See also \cite[Remark 3.8(b) and Lemma 3.13]{Fe}.

\s{\sc $n\Z$-Cluster tilting subcategories.}
Let $(\La, \SM)$ be an $n$-homological pair. By \cite[Lemma 3.5]{I3}, for each $m \in \SM$ and for each exact sequence
\[ 0\lrt m_{n+1} \lrt  m_n \lrt m_{n-1} \lrt \cdots \lrt  m_1 \lrt m_0 \lrt 0 \]
in $\mmod\La$ with all terms in $\SM$, i.e. each $n$-exact sequence in $\SM$, there exists an exact sequence
\[0 \rt \SM(m, m_{n+1}) \rt \SM(m, m_n) \rt \cdots \rt \SM(m, m_0) \rt \Ext^n_\La(m, m_{n+1}) \rt \Ext^n_\La(m, m_n)\]
of abelian groups.

$\SM$ is called $n\Z$-cluster tilting subcategory of $\mmod\La$ if for each $m \in \SM$ and for each $n$-exact sequence
\[ 0\lrt m_{n+1} \lrt  m_n \lrt m_{n-1} \lrt \cdots \lrt  m_1 \lrt m_0 \lrt 0 \]
in $\SM$, there exists an exact sequence
\begin{align*}
0 \lrt & \ \ \ \ \SM(m, m_{n+1}) \  \lrt  \ \ \SM(m, m_n) \  \lrt \ \cdots \lrt \  \SM(m, m_1) \ \  \lrt \ \SM(m, m_0)\\
{}\lrt & \ \ \Ext^n_\La(m, m_{n+1}) \lrt  \Ext^n_\La(m, m_n) \ \lrt  \cdots \lrt  \Ext^n_\La(m, m_1) \ \lrt \Ext^n_\La(m, m_0)\\
{}\lrt & \ \Ext^{2n}_\La(m, m_{n+1})  \lrt  \Ext^{2n}_\La(m, m_n)\lrt  \cdots\lrt  \Ext^{2n}_\La(m, m_1)\lrt \Ext^{2n}_\La(m, m_0)\\
{} \lrt & \cdots
\end{align*}
of abelian groups.

By \cite[Definition-Proposition  2.15]{IJ}, $\SM$ is an $n\Z$-cluster tilting subcategory of $\mmod\La$ if and only if it is closed under $n$-syzygies, i.e. $\Omega_n\SM \subseteq \SM$, or equivalently, it is closed under $n$-cosyzygies, i.e. $\Omega_{-n}\SM \subseteq \SM$.

\begin{definition}\label{nZhomological}
Let $\La$ be an Artin algebra. We say that $(\La, \SM)$ is an $n\Z$-homological pair if $\SM$ is an $n\Z$-cluster tilting subcategory of $\mmod\La$.
\end{definition}

\s{\sc Cotorsion theory.} The idea of cotorsion pairs goes back to the work of Salce on abelian groups \cite{S}. Let us recall the notion in abelian categories. Let $\SA$ be an abelian category with enough projectives and injectives.  Let $\SX$ and $\SY$ be  full subcategories  of $\SA$. $(\SX, \SY)$ is called a cotorsion pair 
\cite[Definition 2.2.1]{GT} if it satisfies the following conditions.

\begin{itemize}
\item[$(i)$] $\SX={}^{\perp_1}\SY$, where ${}^{\perp_1}\SY=\{ x \in\SA \vert\  \Ext^1_\SA(x, y)=0, \ \mbox{for all} \  \ y \in \SY\}$,
\item[$(ii)$] $\SX^{\perp_1}=\SY$, where $\SX^{\perp_1}=\{ y \in\SA \vert \ \Ext^1_\SA(x, y)=0,  \ \mbox{for all}\  \ x \in \SX\}$.
\end{itemize}
$\SX$ is called a cotorsion class and $\SY$ is called a cotorsion free class.

The cotorsion pair $(\SX, \SY)$ is called complete if for every object $a \in \SA$ there exist short exact sequences
\[0 \lrt y \lrt x \lrt a \lrt 0 \ \ \ \ \ {\rm and } \ \ \ \ \ 0 \lrt a \lrt y \lrt x \lrt 0 \]
in $\SA$ such that $x \in \SX$ and $y \in \SY$.

\s A class $\SX$ of objects of $\SA$ is called a special precovering class if every object $a\in\SA$ admits a special right $\SX$-approximation, i.e. there exists a short exact sequence \[0\lrt y \lrt x \lrt a \lrt 0\] in $\SA$ such that $x\in\SX$ and $y \in \SX^{\perp_1}$.
Dually one can define the notion of special preenveloping classes. Hence a cotorsion pair $(\SX, \SY)$ is complete if $\SX$ is a special precovering class and $\SY$ is a special preenveloping class of $\SA$.  Salce's Lemma \cite{S} states that in a cotorsion pair $(\SX, \SY)$ in the category $\Mod R$, where $R$ is an associative ring with identity, $\SX$ is a special precovering class if and only if $\SY$ is a special preenveloping class.

\section{Higher Cotorsion Classes}
Let $\SM$ be an $n$-cluster tilting subcategory of an abelian category $\SA$ and $\SX \subseteq \SM$ be a full subcategory. Let ${\SX\mbox{-}{\rm exact}}_n$ denote the class of all sequences
\[ m_n \lrt m_{n-1} \lrt \cdots \lrt m_1\]
in $\SM$ with the property that for all $x \in \SX$,  the induced sequence
\[ 0 \lrt \Ext^n_{\SA}(x, m_n) \lrt \Ext^n_{\SA}(x, m_{n-1}) \lrt \cdots \lrt \Ext^n_\SA(x, m_1) \lrt 0\]
of abelian groups is exact.

Moreover, let ${}^{\perp}({{\SX\mbox{-}{\rm exact}}_n})$ be the full subcategory of $\SM$ consisting of all $m \in \SM$ such that for every sequence
 \[ m_n \lrt m_{n-1} \lrt \cdots \lrt m_1\]
in ${\SX\mbox{-}{\rm exact}}_n$ the induced sequence
\[ 0 \lrt \Ext^n_{\SA}(m, m_n) \lrt \Ext^n_{\SA}(m, m_{n-1}) \lrt \cdots \lrt \Ext^n_\SA(m, m_1) \lrt 0\]
of abelian groups is exact.

Obviously $\SX \subseteq {}^{\perp}({{\SX\mbox{-}{\rm exact}}_n})$.

\begin{definition}
With the above notations, we say that $\SX$ is an $n$-cotorsion class if 
\[\SX = {}^{\perp}({{\SX\mbox{-}{\rm exact}}_n}).\]
\end{definition}

Clearly if we let $n=1$, we recover the classical case, i.e. by abuse of notation, $(\SX, {\SX\mbox{-}{\rm exact}}_1)$ is a (classical) cotorsion pair.

The following proposition lists some of the basic properties of $n$-cotorsion classes. It, in particular, implies that $\SX$ is an additive subcategory of $\SM$ in the sense of \cite[Definition 2.7]{HJV}. 

\begin{proposition}\label{BasicProperty}
Let $\SX \subseteq \SM$ be an $n$-cotorsion class. Then the following holds.
\begin{itemize}
\item[$(i)$] $\SX$ is closed under direct summands and direct sums.
\item[$(ii)$] $\SX$ contains projective objects.
\item[$(iii)$] If $\SX$ is closed under $n$-syzygies, then for every sequence $r_n \rt \cdots \rt r_1$ in $\SX\mbox{-}{{\rm exact}}_n$ and for every $x \in \SX$, there exists an exact sequence \[0 \lrt \Ext^{ni}_{\SA}(x, r_n) \lrt \cdots \lrt \Ext^{ni}_{\SA}(x, r_1) \lrt 0,\] of abelian groups, for every integer $i\geq 1$.
\end{itemize}
\end{proposition}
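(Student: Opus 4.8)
The three parts are of quite different flavours, and I would prove them in the given order, using the defining characterisation $\SX = {}^{\perp}({{\SX\mbox{-}{\rm exact}}_n})$ throughout. For the whole argument the key observation is that membership in $\SX$ is detected by exactness of induced $\Ext^n$-sequences, and $\Ext^n_{\SA}(-, m_i)$ is an additive (contravariant) functor. This functoriality is what drives $(i)$ and $(ii)$; part $(iii)$ is where the real work lies.

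For part $(i)$, I would argue as follows. Fix an arbitrary sequence $m_n \lrt \cdots \lrt m_1$ in ${\SX\mbox{-}{\rm exact}}_n$; by definition of ${}^{\perp}({{\SX\mbox{-}{\rm exact}}_n})$ it suffices to check that applying $\Ext^n_{\SA}(x,-)$ to this sequence yields an exact sequence, for $x$ a summand or a (finite) direct sum of objects of $\SX$. For direct sums, the natural isomorphism $\Ext^n_{\SA}(x_1 \oplus x_2, -) \cong \Ext^n_{\SA}(x_1,-) \oplus \Ext^n_{\SA}(x_2,-)$ turns the induced sequence into the direct sum of the two exact sequences coming from $x_1, x_2 \in \SX$, and a direct sum of exact sequences of abelian groups is exact. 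For direct summands, if $x \oplus x'' \in \SX$ then the induced sequence for $x \oplus x''$ splits as a direct sum in which the $x$-part is a retract; a retract (as a direct summand) of an exact sequence of abelian groups is again exact. Hence $x$ lies in ${}^{\perp}({{\SX\mbox{-}{\rm exact}}_n}) = \SX$. (One caveat: the excerpt only names closure under finite sums in the introductory discussion, but the functor $\Ext^n_{\SA}(-,m_i)$ carries arbitrary direct sums in the first variable to products; since $\SM \subseteq \mmod\La$ consists of finitely generated modules, the relevant sums are finite and this subtlety does not arise.)

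For part $(ii)$, let $p \in \SM$ be projective. I would show directly that $p \in {}^{\perp}({{\SX\mbox{-}{\rm exact}}_n})$ by observing that $\Ext^n_{\SA}(p, -)$ vanishes on all of $\SM$: projectivity of $p$ in $\mmod\La$ (categorical projectivity in $\SM$ should agree with genuine projectivity here, as $\SM$ is generating) forces $\Ext^i_{\SA}(p, -) = 0$ for all $i > 0$. Then for any sequence $m_n \lrt \cdots \lrt m_1$ in ${\SX\mbox{-}{\rm exact}}_n$ the induced sequence $0 \lrt \Ext^n_{\SA}(p, m_n) \lrt \cdots \lrt \Ext^n_{\SA}(p, m_1) \lrt 0$ is the trivially exact sequence $0 \lrt 0 \lrt \cdots \lrt 0 \lrt 0$. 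Hence $p \in \SX$.

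Part $(iii)$ is the main obstacle and requires the extra hypothesis that $\SX$ is closed under $n$-syzygies. The plan is an induction on $i$, with base case $i=1$ being the definition of membership in ${\SX\mbox{-}{\rm exact}}_n$. For the inductive step I would use a dimension-shifting argument: given $x \in \SX$, choose an $n$-exact sequence $0 \lrt \Om_n x \lrt p_n \lrt \cdots \lrt p_1 \lrt x \lrt 0$ realising the $n$-syzygy, with the $p_j$ projective, so that $\Om_n x \in \SX$ by the syzygy-closure hypothesis. The associated long exact sequence for $\Ext^{*}_{\SA}(-, r_j)$ (the projectives kill all the intermediate $\Ext$ terms) yields natural isomorphisms $\Ext^{n(i+1)}_{\SA}(x, r_j) \cong \Ext^{ni}_{\SA}(\Om_n x, r_j)$ for each $j$, compatibly in $j$; applying the inductive hypothesis to $\Om_n x \in \SX$ then transports the exactness at level $ni$ up to level $n(i+1)$. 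The delicate point to verify carefully is the \emph{naturality} of these dimension-shifting isomorphisms in the second variable $r_j$, so that the isomorphisms assemble into an isomorphism of the two induced sequences (one at level $n(i+1)$ for $x$, one at level $ni$ for $\Om_n x$) rather than merely isomorphisms term-by-term; this is where I would invoke the $\Ext^n$-pushout description of induced maps recorded earlier in the Preliminaries, to identify the connecting maps explicitly and confirm commutativity of the relevant squares.
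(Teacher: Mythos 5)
Your proposal is correct and follows essentially the same route as the paper: parts $(i)$ and $(ii)$ are read off from the definition of ${}^{\perp}({\SX\mbox{-}{\rm exact}}_n)$ via additivity, respectively vanishing, of $\Ext^n_{\SA}(-,m_j)$, and part $(iii)$ is the paper's dimension-shifting argument along $n$-syzygies, which the paper states in one line using $\Omega_{ni}x\in\SX$ rather than your induction on $i$. Your explicit attention to the naturality of the shifting isomorphisms in the second variable is a detail the paper leaves implicit, but it does not change the method.
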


\begin{proof}
Statements $(i)$ and $(ii)$ follows from definition. For the statement $(iii)$, let $x \in \SX$ be given. Since $\SX$ is closed under $n$-syzygies, $\Omega_{ni}x \in \SX$, for all $i \geq 1$. So we have the exact sequence of $\Ext^n$-groups for $\Omega_{ni}x \in \SX$. This implies that we have the exact sequence of $\Ext^{ni}$-groups for $x$.
\end{proof}

\begin{definition}\label{n-special precover}
Let $\SX$ be a full subcategory of $\SM$. We say that $\SX$ is an $n$-special precovering class if for every $m\in\SM$, there exists an $n$-exact sequence 
\[0 \lrt r_n\lrt \cdots\lrt r_1\lrt x \st{\va}\lrt m\lrt 0\] such that $x\in\SX$ and $r_n\lrt \cdots\lrt r_1\in {\SX\mbox{-}{\rm exact}_n}$. $\va:x \lrt m$ is called an $n$-special precover of $m$. 
\end{definition}

\begin{remark}\label{NspecialPrecovering}
Let $\SX \subseteq \SM$ be an $n$-special precovering class in an $n$-homological pair $(\La, \SM)$. Then $\SX$ is a precovering class.
To see this, note that since $\SX$ is an  $n$-special precovering class, then for every $m\in\SM$, there exists an $n$-exact sequence
\[0 \lrt r_n\lrt \cdots\lrt r_1\lrt x\st{\va}\lrt m\lrt 0\]
such that $x\in\SX$ and $r_n\lrt \cdots\lrt r_1\in {\SX\mbox{-}{\rm exact}_n}$. Note that $\va$ is an $\SX$-precover of $m$. To see this, let $x'\rt m$ be a morphism in $\SM$ with $x'\in\SX$. Consider the exact sequence
\[\cdots\rt \SM(x', x)\st{\va^*}\rt \SM(x', m)\rt \Ext^n_\La(x', r_n)\st{\psi}\rt \Ext^n_\La(x', r_{n-1})\]
of abelian groups. Since $r_n \lrt \cdots\lrt r_1\in{\SX\mbox{-}{\rm exact}_n}$, $\psi$ is injective. Hence $\va^*$ is surjective. Therefore $x' \lrt m$ factors through $\va$.  So $\va$ is an $\SX$-precover of $m$. Since $m$ was arbitrary, it implies that $\SX$ is a precovering class.
\end{remark}

\begin{theorem}\label{NspecialNcotor}
Let $(\La, \SM)$ be an $n$-homological pair. Let $\SX \subseteq \SM$ be an $n$-special precovering class which is closed under direct summands. Then $\SX$ is an $n$-cotorsion class.
\end{theorem}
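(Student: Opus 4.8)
The plan is to prove the only nontrivial half of the asserted equality. Since the inclusion $\SX \subseteq {}^{\perp}({\SX\mbox{-}{\rm exact}}_n)$ is automatic, it suffices to establish the reverse inclusion ${}^{\perp}({\SX\mbox{-}{\rm exact}}_n) \subseteq \SX$. So I would fix an arbitrary object $m \in {}^{\perp}({\SX\mbox{-}{\rm exact}}_n)$ and aim to show $m \in \SX$. The overall strategy is to realize $m$ as a direct summand of an object already known to lie in $\SX$, and then appeal to the hypothesis that $\SX$ is closed under direct summands.

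First I would use that $\SX$ is an $n$-special precovering class: applying the defining property to the chosen $m$ produces an $n$-exact sequence
\[\eta: 0 \lrt r_n \lrt \cdots \lrt r_1 \lrt x \st{\va}\lrt m \lrt 0\]
in $\SM$ with $x \in \SX$ and $(r_n \lrt \cdots \lrt r_1) \in {\SX\mbox{-}{\rm exact}}_n$. The concrete goal then becomes to prove that the epimorphism $\va$ splits, since a splitting exhibits $m$ as a summand of $x$.

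Next I would feed $\eta$ into the long exact sequence available for an $n$-homological pair, namely \cite[Lemma 3.5]{I3}, applied with the test object taken to be $m$ itself. Matching the terms of $\eta$ with the indexing there, $m_{n+1}=r_n,\ \ldots,\ m_2=r_1,\ m_1=x,\ m_0=m$, the relevant tail of the resulting exact sequence reads
\[\cdots \lrt \SM(m, x) \st{\va_*}\lrt \SM(m, m) \st{\delta}\lrt \Ext^n_\SA(m, r_n) \st{\phi}\lrt \Ext^n_\SA(m, r_{n-1}).\]
The crucial input is that $m \in {}^{\perp}({\SX\mbox{-}{\rm exact}}_n)$ while $(r_n \lrt \cdots \lrt r_1) \in {\SX\mbox{-}{\rm exact}}_n$, so by the definition of the orthogonal the sequence $0 \lrt \Ext^n_\SA(m, r_n) \lrt \cdots \lrt \Ext^n_\SA(m, r_1) \lrt 0$ is exact; in particular the map $\phi$ is injective.

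I would then finish with a short diagram chase. Injectivity of $\phi$ gives $\ker\phi = 0$, so exactness at $\Ext^n_\SA(m, r_n)$ forces the connecting map $\delta$ to vanish; exactness at $\SM(m, m)$ then shows $\va_*$ is surjective. Hence $\id_m$ lifts to a morphism $s: m \lrt x$ with $\va s = \id_m$, so $\va$ is a split epimorphism and $m$ is a direct summand of $x \in \SX$. As $\SX$ is closed under direct summands, $m \in \SX$, which completes the inclusion and the proof. The argument is essentially a one-line diagram chase, so the only real obstacle is bookkeeping: correctly invoking \cite[Lemma 3.5]{I3} with $m$ as the test object, aligning its indices with those of $\eta$, and translating the membership $m \in {}^{\perp}({\SX\mbox{-}{\rm exact}}_n)$ into the injectivity of $\phi$ that drives the chase.
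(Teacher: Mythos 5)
Your proof is correct and takes essentially the same route as the paper: both reduce to showing ${}^{\perp}({{\SX\mbox{-}{\rm exact}}_n}) \subseteq \SX$, apply an $n$-special precover $\va\colon x \lrt m$ to a given $m$ in the orthogonal, and use the long exact sequence of \cite[Lemma 3.5]{I3} together with the injectivity of $\Ext^n_\La(m, r_n) \lrt \Ext^n_\La(m, r_{n-1})$ to conclude that $\va$ is a split epimorphism, so that $m$ is a summand of $x \in \SX$. Your version merely makes explicit what the paper leaves implicit, namely the citation of \cite[Lemma 3.5]{I3}, the index matching, and the vanishing of the connecting map in the diagram chase.
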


\begin{proof}
To show that $\SX$ is an $n$-cotorsion class, by definition, we should show that $ {}^{\perp}({{\SX\mbox{-}{\rm exact}}_n}) \subseteq \SX$. To this end, assume that $m\in {}^{\perp}({{\SX\mbox{-}{\rm exact}}_n})$ is given. Since $\SX$ is an $n$-special precovering class, there exists an $n$-exact sequence
\[0 \lrt r_n \lrt \cdots \lrt r_1 \lrt x\st{\va} \lrt m \lrt 0\]
such that $x\in\SX$ and $r_n \lrt \cdots \lrt r_1 \in {{\SX\mbox{-}{\rm exact}}_n}$.  Consider the exact sequence
\[\cdots \lrt \SM(m, x)\st{\va^*} \lrt \SM(m, m) \lrt \Ext^n_\La(m, r_n)\st{\psi} \lrt \Ext^n_\La(m, r_{n-1})\]
of abelian groups. Now $m\in {}^{\perp}({{\SX\mbox{-}{\rm exact}}_n})$ implies that $\psi$ is injective. So $\va^*$ is surjective and hence $\va$ is a split epimorphism.  Therefore $m$ is a summand of $x$. Hence $m\in\SX$, as $\SX$ is closed under summands. So the proof is complete.
\end{proof}

\begin{remark}
Let $\prj\La$ denotes the class of projective $\La$-modules. It is obviously an $n$-special precovering class and is closed under summands. Hence, by the above theorem, it is an $n$-cotorsion class in the $n$-homological pair $(\La, \SM)$.
\end{remark}

\begin{theorem}
Let $(\La, \SM)$ be an $n\Z$-homological pair.  Let $\SX \subseteq \SM$ be a special precovering class which is closed under direct summands and satisfies the condition $\Ext^n_{\La}(\SX, \SX)=0$. Then $\SX$ is an $n$-cotorsion class.
\end{theorem}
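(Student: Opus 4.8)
The plan is to reduce the statement to Theorem \ref{NspecialNcotor}. An $n\Z$-homological pair is in particular an $n$-homological pair, and $\SX$ is assumed closed under direct summands; hence it suffices to prove that $\SX$ is an $n$-special precovering class in the sense of Definition \ref{n-special precover}, for then Theorem \ref{NspecialNcotor} shows at once that $\SX$ is an $n$-cotorsion class.

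To verify the $n$-special precovering property, fix $m \in \SM$. The idea is to start from the classical special right approximation and promote it to an $n$-exact sequence living inside $\SM$. Since $\SX$ is a special precovering class there is a short exact sequence $0 \lrt y \lrt x \st{\va}\lrt m \lrt 0$ in $\mmod\La$ with $x \in \SX$ and $y \in \SX^{\perp_1}$. As $\va$ is surjective it is right cancellable, hence an epimorphism in the full subcategory $\SM$; since $\SM$ is $n$-abelian (being $n$-cluster tilting, \cite{Ja}), every epimorphism completes to an $n$-exact sequence, so I would complete $\va$ to an $n$-exact sequence
\[0 \lrt r_n \lrt \cdots \lrt r_1 \lrt x \st{\va}\lrt m \lrt 0\]
in $\SM$ with $x \in \SX$. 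Note that $y$ itself need not lie in $\SM$, which is precisely why the tail $r_n \lrt \cdots \lrt r_1$ has to be manufactured through the $n$-abelian structure rather than read off the short exact sequence.

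It then remains to show $r_n \lrt \cdots \lrt r_1 \in {\SX\mbox{-}{\rm exact}_n}$, i.e. that $0 \lrt \Ext^n_\SA(x', r_n) \lrt \cdots \lrt \Ext^n_\SA(x', r_1) \lrt 0$ is exact for every $x' \in \SX$. Here I would use that $\SM$ is $n\Z$-cluster tilting and apply the long exact sequence attached to the displayed $n$-exact sequence to the test object $x'$, namely
\[\cdots \lrt \SM(x', x) \st{\va_*}\lrt \SM(x', m) \lrt \Ext^n_\La(x', r_n) \lrt \cdots \lrt \Ext^n_\La(x', r_1) \lrt \Ext^n_\La(x', x) \lrt \cdots\]
Exactness in the interior is automatic, so the verification reduces to the two ends. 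At the left end one needs the connecting map $\SM(x', m) \lrt \Ext^n_\La(x', r_n)$ to vanish, equivalently $\va_*$ to be surjective; this follows from $y \in \SX^{\perp_1}$, since applying $\Hom(x', -)$ to $0 \lrt y \lrt x \lrt m \lrt 0$ yields $\SM(x', x) \st{\va_*}\lrt \SM(x', m) \lrt \Ext^1_\La(x', y) = 0$. At the right end one needs $\Ext^n_\La(x', r_1) \lrt \Ext^n_\La(x', x)$ to vanish, which is automatic because $\Ext^n_\La(x', x) = 0$ by the rigidity hypothesis $\Ext^n_\La(\SX, \SX) = 0$. This shows $\va$ is an $n$-special precover of $m$, and as $m$ was arbitrary, $\SX$ is an $n$-special precovering class.

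The hard part is the second paragraph: converting the $\Ext^1$-flavoured special precover into a genuine $n$-exact sequence inside $\SM$. This is where all three hypotheses conspire, namely the $n$-abelian structure (to build the tail within $\SM$), the $n\Z$-property (to obtain the $\Hom$--$\Ext^n$ long exact sequence), and the rigidity $\Ext^n_\La(\SX,\SX)=0$ (to annihilate the obstruction at the right-hand end); the condition $y \in \SX^{\perp_1}$ is what reconciles the classical input with the higher output at the left-hand end. The point that most deserves care is that $\va$, though merely surjective in $\mmod\La$ with $y\notin\SM$ in general, indeed completes to an $n$-exact sequence with all terms in $\SM$, for which one relies on the epimorphism characterisation in the $n$-abelian category $\SM$.
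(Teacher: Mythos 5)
Your proposal is correct and follows essentially the same route as the paper's proof: reduce to Theorem \ref{NspecialNcotor}, take a classical special precover $0 \lrt y \lrt x \st{\va}\lrt m \lrt 0$, complete $\va$ to an $n$-exact sequence in $\SM$ (the paper phrases this as taking the $n$-kernel of $\va$ in $\SM$), and read off the two endpoint conditions from the $n\Z$ long exact sequence, using $y \in \SX^{\perp_1}$ for surjectivity of $\va^*$ and $\Ext^n_{\La}(\SX,\SX)=0$ to kill the right-hand end. Your explicit check that $\va$ remains an epimorphism in $\SM$ and so completes to an $n$-exact sequence is a detail the paper leaves implicit, but it is the same argument.
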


\begin{proof}
By Theorem \ref{NspecialNcotor}, it is enough to show that $\SX$ is an $n$-special precovering class of $\SM$. Assume that $m \in \SM$ is given. Since $\SX$ is a special precovering class, there exists a short exact sequence $\eta: 0\lrt y \lrt x \st{\va} \lrt m \lrt 0$, with $x \in\SX$ and $y \in\SX^{{\perp}_1}$. By taking $n$-kernel of $\va$ in $\SM$, we get the following $n$-exact sequence
\[0\lrt r_n \lrt \cdots \lrt r_1 \lrt x \lrt m \lrt 0.\]
To complete the proof, we need to show that $r_n \lrt \cdots \lrt r_1$ is in $\SX\mbox{-}{\rm exact}_n$. We do this. Since $\SM$ is $n\Z$-cluster tilting, for every $x'\in\SX$, we have exact sequence
\[\cdots \rt \SM(x', x)\st{\va^*} \rt \SM(x', m) \rt \Ext^n_\La(x', r_n) \rt \cdots \rt \Ext^n_\La(x', r_1) \rt \Ext^n_\La(x', x) \st{\va_*} \rt \Ext^n_\La(x', m)\]
of abelian groups.

The long exact sequence of Ext groups inducing from the short exact sequence $\eta$, in view of the facts that  $\Ext^1_\La(x', y)=0$, implies that
$\SM(x', x)\st{\va^*}\lrt \SM(x', m)$ is an epimorphism. Moreover, by assumption, $\Ext^n_\La(x', x)=0$. Hence we get the desired exact sequence
\[0\lrt \Ext^n_\La(x', r_n) \lrt \cdots \lrt \Ext^n_\La(x', r_1) \lrt 0.\]
Therefore $r_n \lrt \cdots \lrt r_1$ is in $\SX\mbox{-}{\rm exact}_n$. The proof is hence complete.
\end{proof}

As an immediate corollary of the above theorem, we have the following result.

\begin{corollary}
Let $(\La, \SM)$ be an $n\Z$-homological pair. Let $(\SX, \SY)$ be a complete cotorsion pair in $\mmod\La$ such that $\SX \subseteq \SM$ and satisfies the condition $\Ext^n_{\La}(\SX, \SX)=0$. Then $\SX$ is an $n$-cotorsion class and $\SM \subseteq \SY.$
\end{corollary}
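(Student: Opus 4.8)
The plan is to deduce both assertions from the preceding theorem together with the defining vanishing properties of an $n$-cluster tilting subcategory, so that essentially no fresh computation is required; the corollary is really a packaging of what has already been proved.

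First I would verify that $\SX$ satisfies the hypotheses of the preceding theorem. By assumption $\SX \subseteq \SM$ and $\Ext^n_\La(\SX, \SX) = 0$, so it remains to confirm that $\SX$ is a special precovering class of $\mmod\La$ and that it is closed under direct summands. Closure under summands is automatic: since $\SX = {}^{\perp_1}\SY$ and $\Ext^1_\La(-, y)$ is additive, a decomposition $x = x_1 \oplus x_2 \in \SX$ forces $\Ext^1_\La(x_i, y) = 0$ for every $y \in \SY$, whence each $x_i \in {}^{\perp_1}\SY = \SX$. For the special precovering property I would invoke completeness of the cotorsion pair: for each object $a$ of $\mmod\La$ there is a short exact sequence $0 \lrt y \lrt x \lrt a \lrt 0$ with $x \in \SX$ and $y \in \SY = \SX^{\perp_1}$, which is precisely a special right $\SX$-approximation. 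Applying the preceding theorem then yields that $\SX$ is an $n$-cotorsion class.

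Next I would establish the inclusion $\SM \subseteq \SY$. Since $\SY = \SX^{\perp_1}$, it suffices to show $\Ext^1_\La(x, m) = 0$ for every $x \in \SX$ and every $m \in \SM$. Here the key input is that $\SX \subseteq \SM$ and that, by the definition of an $n$-cluster tilting subcategory, $\SM = \SM^{\perp_n}$; that is, $\Ext^i_\La(\SM, \SM) = 0$ for all $0 < i < n$. In particular $\Ext^1_\La(\SM, \SM) = 0$, so $\Ext^1_\La(x, m) = 0$ and hence $m \in \SX^{\perp_1} = \SY$. As $m \in \SM$ was arbitrary, this gives $\SM \subseteq \SY$.

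I do not anticipate a genuine obstacle, since each ingredient is either hypothesis or a cited property; the only point requiring care is the degenerate case $n = 1$, where the vanishing range $0 < i < n$ is empty and the cluster tilting axiom collapses to $\SM = \SA = \mmod\La$. In that classical situation the inclusion $\SM \subseteq \SY$ no longer follows from intermediate $\Ext$-vanishing and must be extracted directly from the structure of the cotorsion pair; I would accordingly emphasise that the substantive content of the statement is the higher range $n \geq 2$, for which the argument above applies verbatim.
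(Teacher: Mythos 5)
Your argument is correct. Its first half coincides with the paper's, which simply cites the preceding theorem; you additionally spell out the verification the paper leaves implicit, namely that completeness of the cotorsion pair yields the special precovering property and that $\SX = {}^{\perp_1}\SY$ forces closure under direct summands (both checks are right). For the inclusion $\SM \subseteq \SY$, however, you take a genuinely more direct route than the paper. The paper uses the \emph{second} sequence supplied by completeness, $0 \lrt m \lrt y \lrt x \lrt 0$ with $y \in \SY$ and $x \in \SX$, observes $\Ext^1_\La(x, m) = 0$ because $x, m \in \SM$, concludes that the sequence splits, and then invokes closure of $\SY$ under summands to get $m \in \SY$. You bypass all of this: since $\SY = \SX^{\perp_1}$ by the very definition of a cotorsion pair, the same vanishing $\Ext^1_\La(x, m) = 0$ for all $x \in \SX$ places $m$ in $\SY$ immediately. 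Both proofs thus rest on the identical key input --- $\Ext^1_\La(\SM, \SM) = 0$, which the $n$-cluster tilting axiom supplies --- but your version shows that completeness is needed only for the precovering half of the corollary, and that no splitting or summand-closure argument is required for the second half. Your closing caveat about the degenerate case is also well taken, and in fact applies equally to the paper's own proof: when $n = 1$ the vanishing range $0 < i < n$ is empty, $\SM = \mmod\La$, and the step $\Ext^1_\La(x, m) = 0$ is unjustified in both arguments, so the statement carries its intended content for $n \geq 2$, with $n = 1$ requiring separate treatment.
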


\begin{proof}
It follows from the above theorem that $\SX$ is an $n$-cotorsion class. To see that $\SM \subseteq \SY$, let $m \in \SM$ is given. Consider the short exact sequence 
$0 \lrt m \lrt y \lrt x \lrt 0$, with $x \in \SX$ and $y \in \SY$. Since $\SX \subseteq \SM$, $\Ext^1(x, m)=0$. Hence the sequence is split and so $m$ is a summand of $y$. Since $\SY$ is closed under summands, $m \in \SY$. So $\SM \subseteq \SY.$
\end{proof}

\section{Wakamatsu's Lemma}
An important result in classical homological algebra that provides a sufficient condition for the existence of special precovers is Wakamatsu's Lemma \cite[Lemma 2.1.1]{X}. Wakamatsu's Lemma states that for a given subclass $\SX$ of $\SA$ which is closed under summands and extensions, every surjective $\SX$-cover of an object $a \in \SA$ is a special $\SX$-precover, i.e. its kernel belongs to $\SX^{\perp}$. See also \cite[Lemma 2.1.13]{GT}. Recall that $\SX$ is closed under extensions if for every short exact sequence $0 \rt X' \rt A \rt X \rt 0$ in $\CA$ with $X, X' \in \SX$, we deduce that $A \in \SX$. For a version of Wakamatsu's Lemma for $(n+2)$-angulated categories, see \cite[Lemma 3.1]{Jor}. In the following, we present a version of this lemma in $n$-abelian categories.

Let $\SM$ be an $n$-abelian category and $\SX$ be a full subcategory of $\SM$. By \cite[Appendix A]{I1} an $n$-exact sequence
\[0 \lrt m_{n+1} \st{ u_{n+1} }{ \lrt }m_n \lrt  \cdots \lrt m_1 \st{ u_1 }{\lrt } m_{0} \lrt 0\]
in $\SM$ is called  almost-minimal if for any $2\leq i \leq n$, $u_i$ is in the Jacobson radical of $\SM$. Recall that the Jacobson radical of $\SM$ is a two sided ideal in $\SM$, denoted by $\rad_{\SM}$, and for $m$ and $m'$ in $\SM$, is defined by
\[\rad_{\SM}(m,m')=\{f: m \lrt m' \mid \ 1_m-gf \ {\rm is \ invertible \ for \ every } \ g: m' \lrt m\}.\]
  
\begin{definition}
Let $\SM$ be an $n$-abelian category and $\SX$ be a full subcategory of $\SM$. We say that $\SX$ is left closed under $n$-extensions if for each  almost-minimal $n$-exact sequence
\[0 \lrt x \lrt x_n \lrt x_{n-1} \lrt \cdots \lrt x_1 \lrt x' \lrt 0\]
with $x$ and $x'$ in $\SX$, we deduce that $x_n \in \SX$.
\end{definition}

Let $(\La, \SM)$ be an $n$-homological pair. Let $\SX \subseteq \SM$ be an additive subcategory. $\SX$ is closed under $n$-extensions \cite[Definition 2.10]{Fe} if every $n$-exact sequence
\[0 \lrt x' \lrt m_n \lrt m_{n-1} \lrt \cdots \lrt m_0 \lrt x \lrt 0\]
in $\SM$ with $x, x' \in \SX$ is Yoneda equivalent to an $n$-exact sequence 
\[0 \lrt x' \lrt x_n \lrt x_{n-1} \lrt \cdots \lrt x_0 \lrt x \lrt 0\]
with all terms in $\SX$. 
Now let $\SX \subseteq \SM$ be closed under $n$-extensions and let $x \lrt m$ be the $\SX$-cover of an object $m$ in $\SM$. It is proved in \cite[Lemma 5.1]{Fe} that for every $x' \in \SX$ the morphism \[ \Ext^n_{\La}(x', x) \lrt \Ext^n_{\La}(x', m)\]
is a monomorphism of abelian groups. Here we use this fact in the proof of next theorem. For the convenience of the reader we provide a proof.

\begin{theorem}(Wakamatsu's Lemma)\label{Wakamatsu}
Let $(\La, \SM)$ be an $n\Z$-homological pair. Let $\SX\subseteq \SM$ be a full subcategory which is left closed under $n$-extensions. Let $m \in \SM$ and $f: x \lrt m$ be a surjective $\SX$-cover of $m$. Then for every $n$-kernel $k_n \lrt \cdots \lrt k_1$ of $f$ and every $x' \in \SX$, there is an exact sequence
\[0\rt \Ext^{n}_\La(x', {k_n})\rt \cdots\rt\Ext^{n}_\La(x', {k_1})\rt 0,\]
of abelian groups.
\end{theorem}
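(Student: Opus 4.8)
The plan is to realise the prescribed $n$-kernel as the initial segment of an $n$-exact sequence and then to read off the required exactness from the long exact sequence attached to the $n\Z$-cluster tilting subcategory $\SM$. Since $f\colon x\lrt m$ is an epimorphism in the $n$-abelian category $\SM$, it can be completed to an $n$-exact sequence
\[0\lrt k_n\lrt k_{n-1}\lrt\cdots\lrt k_1\st{g}\lrt x\st{f}\lrt m\lrt 0,\]
whose initial segment is an $n$-kernel of $f$; any $n$-kernel occurs this way and the exactness to be proved is insensitive to the choice. Fix $x'\in\SX\subseteq\SM$. Because $(\La,\SM)$ is $n\Z$-homological, this $n$-exact sequence produces the long exact sequence
\[\cdots\lrt\SM(x',x)\st{f_*}\lrt\SM(x',m)\st{\delta}\lrt\Ext^n_\La(x',k_n)\lrt\cdots\]
\[\cdots\lrt\Ext^n_\La(x',k_1)\st{g_*}\lrt\Ext^n_\La(x',x)\st{f_*}\lrt\Ext^n_\La(x',m)\lrt\cdots\]
of abelian groups. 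Exactness of $0\lrt\Ext^n_\La(x',k_n)\lrt\cdots\lrt\Ext^n_\La(x',k_1)\lrt 0$ is then equivalent to three things: exactness at the interior terms $\Ext^n_\La(x',k_i)$ for $2\leq i\leq n-1$, which is immediate from the displayed sequence; injectivity of the left-most map, i.e.\ $\delta=0$; and surjectivity of the right-most map $\Ext^n_\La(x',k_2)\lrt\Ext^n_\La(x',k_1)$, which by exactness at $\Ext^n_\La(x',k_1)$ (where the image equals $\Ker g_*$) amounts to $g_*=0$.

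For $\delta=0$ it suffices that $f_*\colon\SM(x',x)\lrt\SM(x',m)$ be surjective, and this is exactly the statement that every morphism $x'\lrt m$ with $x'\in\SX$ factors through $f$; it holds because $f$ is an $\SX$-precover. This settles the left-most injectivity, and together with the automatic interior exactness reduces everything to the single claim $g_*=0$.

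By exactness of the long sequence at $\Ext^n_\La(x',x)$ one has $\im g_*=\Ker\bigl(f_*\colon\Ext^n_\La(x',x)\lrt\Ext^n_\La(x',m)\bigr)$, so $g_*=0$ will follow once
\[f_*\colon\Ext^n_\La(x',x)\lrt\Ext^n_\La(x',m)\]
is shown to be a monomorphism. This is the crux, and the only place where the genuine $\SX$-cover hypothesis (not merely precovering) and left-closure under $n$-extensions are used; it is the higher analogue of Wakamatsu's Lemma recorded before the statement as \cite[Lemma 5.1]{Fe}. I would establish it by a Wakamatsu-type $n$-pushout argument: given $\xi\in\Ext^n_\La(x',x)$ with $f_*(\xi)=0$, represent $\xi$ by an $n$-exact sequence in $\SM$ with end terms $x$ and $x'$ and reduce it to an almost-minimal representative (\cite[Appendix A]{I1}), so that left-closure under $n$-extensions applies and forces the relevant middle term into $\SX$. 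Reading $f_*(\xi)$ as the $n$-pushout of $\xi$ along $f$, as in the subsection on morphisms of $\Ext^n$ groups, its vanishing yields an endomorphism of $x$ compatible with $f$; the cover property---every $\psi\colon x\lrt x$ with $f\psi=f$ is an automorphism---then forces $\xi=0$ through the splitting criterion \cite[Proposition 2.6]{Ja}.

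The main obstacle is precisely this last step: matching the almost-minimal completion against the left-closure hypothesis and converting the cover condition into a splitting. Once the monomorphism is in hand, $g_*=0$ gives the right-most surjectivity, and combined with $\delta=0$ and the interior exactness we obtain that
\[0\lrt\Ext^n_\La(x',k_n)\lrt\cdots\lrt\Ext^n_\La(x',k_1)\lrt 0\]
is exact for every $x'\in\SX$, which is the assertion of the theorem.
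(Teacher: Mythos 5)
Your proposal is correct and is essentially the paper's own proof: both complete $f$ to an $n$-exact sequence, invoke the long exact sequence supplied by the $n\Z$-condition, reduce the claim to surjectivity of $f^*$ on $\SM(x',-)$ (immediate from the precover property) together with injectivity of $\hat{f}\colon \Ext^n_\La(x',x)\lrt\Ext^n_\La(x',m)$, and obtain the latter exactly as you outline --- almost-minimal representative via \cite[Appendix A]{I1}, left-closure under $n$-extensions forcing $x_n\in\SX$, the $n$-pushout reading of $\hat{f}$, and the cover property yielding that $d^n_x$ is a split monomorphism, which is the paper's rendition of \cite[Lemma 5.1]{Fe}. The only divergence is expository: you spell out the boundary bookkeeping ($\delta=0$ and $g_*=0$) that the paper compresses into ``it is enough to show $f^*$ is surjective and $\hat{f}$ is injective,'' while the paper writes out in full the null-homotopy step (via \cite[Lemma 3.6]{Fe}, extracting $s^1$ with $s^1d^n_x=f$ and then factoring $s^1$ through the precover) that you leave as a sketch.
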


\begin{proof}
Take $n$-kernel of $f$ to get the $n$-exact sequence
\[0 \lrt k_n \lrt \cdots \lrt k_1 \lrt x \st{f} \lrt m \lrt 0.\]
Since $\SM$ is an $n\Z$-cluster tilting subcategory of $\mmod\La$, for every $x' \in \SX$, we have long exact sequence
\begin{align*}
0 \lrt \ \SM(x', k_n) \ \lrt \cdots \lrt \ \SM(x', k_1) \ \ \lrt \  \SM(x', x) \ \  \st{f^*}\lrt & \ \SM(x', m) \ \lrt \\
 \Ext^n_\La(x', k_n) \lrt  \cdots \lrt \Ext^n_\La(x', k_1) \lrt \Ext^n_\La(x', x) \st{\hat{f}}\lrt  & \Ext^n_\La(x', m) \lrt \cdots
\end{align*}
of abelian groups. For the proof, it is enough to show that $f^*$ is surjective and $\hat{f}$ is injective. Since $f$ is an $\SX$-cover and $x' \in \SX$, it follows that  $f^*$ is surjective. We show that $\hat{f}$ is injective as well \cite[Lemma 5.1]{Fe}. Let
\[\eta: 0 \lrt x \lrt x_n \lrt \cdots \lrt x_1 \lrt x' \lrt 0\]
be an object of $\Ext^n_\La(x', x)$ that maps to zero in $\Ext^n_\La(x', m)$, that is, the $n$-pushout of $\eta$ along $f$, say $\eta'$, is contractible. We show that $\eta$ itself should be contractible. We note that by A.1.Proposition of \cite{I1} we can assume that $\eta$ is almost-minimal.  Assume that $\eta'$ illustrated in the following $n$-pushout diagram is contractible.
\begin{equation*}
\begin{tikzcd}
\eta\dar{\hat{f}}: &x \dar{f} \rar{d^n_x} & x_n \rar{d^{n-1}_x} \dar{f^n}\dlar[dotted] & x_{n-1} \rar{d^{n-2}_x}\dar{f^{n-1}} & \cdots \rar & x_1 \rar[two heads]{d^0_x} \dar{f^1} &  x' \dar[equals]\dlar[dotted]\\
\eta': &m \rar{d^n_y} & y_n \rar{d^{n-1}_y} & y_{n-1} \rar{d^{n-2}_y} & \cdots \rar & y_1 \rar[two heads]{d^0_y}  & x'
\end{tikzcd}
\end{equation*}
 So there exists morphism  $s^{n+1}: x'\rt y_1$ such that $d^{0}_y s^{n+1}=1_{x'}$.  This implies, by \cite[ Lemma 3.6]{Fe}, that morphism $\hat{f}:\eta\rt \eta'$ is null-homotopic. In particular, there exists a morphism $s^1: x_n\rt m$ such that $s^1d^n_x=f$. Now since $\SX$ is left closed under $n$-extensions, $x_n\in\SX$ and so $s^1$ factors through $f$. Let $g: x_n\rt x$ be such that $fg= s^1$.  By composition with $d^n_x$  we get $fg d^n_x=s^1d^n_x=f$. Since $f$ is an $\SX$-cover, $gd^n_x: x\rt x$ is an isomorphism. So $d^n_x$ is a split monomorphism. Hence $\eta$ is a contractible $n$-exact sequence, by part $(i)$ of Remark \ref{Contractible}.
\end{proof}

\begin{corollary}
Let $(\La, \SM)$ be an $n\Z$-homological pair. Let $\SX\subseteq \SM$ be a full subcategory which is closed under direct summands, left closed under $n$-extensions and permits surjective $\SX$-covers. Then $\SX$ is an $n$-cotorsion class.
\end{corollary}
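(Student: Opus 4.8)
The plan is to reduce the statement to Theorem \ref{NspecialNcotor}, which already establishes that any $n$-special precovering class of $\SM$ that is closed under direct summands is automatically an $n$-cotorsion class. Since $\SX$ is assumed to be closed under direct summands, it suffices to verify that $\SX$ is an $n$-special precovering class in the sense of Definition \ref{n-special precover}; that is, I must produce, for each $m \in \SM$, an $n$-exact sequence
\[0 \lrt r_n \lrt \cdots \lrt r_1 \lrt x \st{\va}\lrt m \lrt 0\]
with $x \in \SX$ and $r_n \lrt \cdots \lrt r_1 \in {\SX\mbox{-}{\rm exact}}_n$.

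First I would invoke the hypothesis that $\SX$ permits surjective $\SX$-covers: given $m \in \SM$, choose a surjective $\SX$-cover $f : x \lrt m$ with $x \in \SX$. Since $\SM$ is $n$-abelian and $f$ is an epimorphism, $f$ completes to an $n$-exact sequence upon taking an $n$-kernel,
\[0 \lrt k_n \lrt \cdots \lrt k_1 \lrt x \st{f}\lrt m \lrt 0,\]
and the candidate $n$-special precover is then $f$ itself, with $r_i := k_i$. It remains only to check that the $n$-kernel $k_n \lrt \cdots \lrt k_1$ lies in ${\SX\mbox{-}{\rm exact}}_n$.

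This is exactly where the two remaining hypotheses enter, and the key step is a direct appeal to Wakamatsu's Lemma. Because $(\La, \SM)$ is an $n\Z$-homological pair, $\SX$ is left closed under $n$-extensions, and $f$ is a surjective $\SX$-cover, Theorem \ref{Wakamatsu} applies verbatim and yields, for every $x' \in \SX$, an exact sequence
\[0 \lrt \Ext^n_\La(x', k_n) \lrt \cdots \lrt \Ext^n_\La(x', k_1) \lrt 0\]
of abelian groups. By the definition of ${\SX\mbox{-}{\rm exact}}_n$, this is precisely the assertion that $k_n \lrt \cdots \lrt k_1 \in {\SX\mbox{-}{\rm exact}}_n$. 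Hence $f$ is an $n$-special precover of $m$, and since $m$ was arbitrary, $\SX$ is an $n$-special precovering class. Finally I would apply Theorem \ref{NspecialNcotor} to conclude that $\SX$ is an $n$-cotorsion class.

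I expect no genuine obstacle: the hypotheses are tailored so that each assumption feeds exactly one input into the pipeline. The surjective covers supply the epimorphism together with its $n$-kernel; left closure under $n$-extensions is the running hypothesis of Wakamatsu's Lemma; and closure under direct summands is the running hypothesis of Theorem \ref{NspecialNcotor}. The only point requiring any care is confirming that the conclusion of Wakamatsu's Lemma matches the defining exactness condition of ${\SX\mbox{-}{\rm exact}}_n$ on the nose, which it does, since both assert exactness of the very same $\Ext^n$-complex for all test objects $x' \in \SX$.
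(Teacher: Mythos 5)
Your proposal is correct and follows exactly the paper's route: use the surjective $\SX$-cover of each $m\in\SM$, apply Wakamatsu's Lemma (Theorem \ref{Wakamatsu}) to place the $n$-kernel in ${\SX\mbox{-}{\rm exact}}_n$, so that $\SX$ is an $n$-special precovering class, and then invoke Theorem \ref{NspecialNcotor} via closure under direct summands. The paper's own proof is a two-line version of precisely this argument, and your more detailed unwinding of each hypothesis is accurate throughout.
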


\begin{proof}
In view of Wakamatsu's Lemma, we deduce that $\SX$ is an $n$-special precovering class. Hence by Theorem \ref{NspecialNcotor} it is an $n$-cotorsion class.
\end{proof}

\section{Wide subcategories}
In this section we provide some examples of $n$-cotorsion classes in an $n$-abelian category, using the notion of wide subcategories in higher homological algebra introduced and studied in \cite{HJV}. Let us begin with the definition of wide subcategories.

\begin{definition}\label{DefWide}(See \cite[Definition 2.8]{HJV})
Let $\SM$ be an $n$-abelian category. Let $\SW$ be an additive subcategory of $\SM$, i.e. is a full subcategory closed under direct sums, direct summands, and isomorphisms. $\SW$ is called wide if the following conditions hold.
\begin{itemize}
\item[$(i)$] Every morphism in $\SW$ admits an $n$-kernel and an $n$-cokernel in $\SM$ with all terms in $\SW$.
\item[$(ii)$] Every $n$-exact sequence \[0 \lrt w \lrt m_n \lrt \cdots \lrt m_1 \lrt w' \lrt 0,\]  with $w, w' \in \SW$ in $\SM$, is Yoneda equivalent to an $n$-exact sequence \[0 \lrt w \lrt w_n \lrt \cdots \lrt w_1 \lrt w' \lrt 0,\] with all terms in $\SW$.
\end{itemize}
\end{definition}

\begin{theorem}\label{Wide}
Let $(\La, \SM)$ be an $n\Z$-homological pair. Let $\SW$ be a wide subcategory of $\SM$ containing projective modules. If the inclusion functor $\SW\st{i} \lrt \SM$ has a right adjoint, then $\SW$ is an $n$-cotorsion class in $\SM$.
\end{theorem}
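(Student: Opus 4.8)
The plan is to show that $\SW$ is an $n$-special precovering class of $\SM$ and then invoke Theorem \ref{NspecialNcotor}; since a wide subcategory is in particular closed under direct summands (Definition \ref{DefWide}), this immediately yields that $\SW$ is an $n$-cotorsion class. To produce the required $n$-special precovers I intend to combine the existence of surjective $\SW$-covers with the higher Wakamatsu Lemma (Theorem \ref{Wakamatsu}): if every $m \in \SM$ admits a surjective $\SW$-cover $f \colon w \lrt m$ and $\SW$ is left closed under $n$-extensions, then for any $n$-kernel $k_n \lrt \cdots \lrt k_1$ of $f$ and any $w' \in \SW$ the sequence $0 \lrt \Ext^n_\La(w', k_n) \lrt \cdots \lrt \Ext^n_\La(w', k_1) \lrt 0$ is exact, i.e.\ $k_n \lrt \cdots \lrt k_1 \in \SW\mbox{-}{\rm exact}_n$, which is precisely the statement that $f$ is an $n$-special precover. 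Thus the whole argument reduces to verifying two things: (a) $\SW$ permits surjective $\SW$-covers, and (b) $\SW$ is left closed under $n$-extensions.

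For (a), let $R$ denote the right adjoint of the inclusion $i \colon \SW \lrt \SM$. For each $m \in \SM$ the counit $\ve_m \colon iR(m) \lrt m$ is, by the universal property of the adjunction, a right $\SW$-approximation (an $\SW$-precover) of $m$; hence $\SW$ is a precovering (contravariantly finite) subcategory of $\SM$. Since $\La$ is an Artin algebra, $\mmod\La$ and therefore its subcategory $\SM$ are Krull--Schmidt, and a precovering subcategory closed under summands then admits minimal right approximations, so every $m \in \SM$ has an $\SW$-cover $f \colon w \lrt m$. Finally $f$ is surjective: choosing a projective object $p$ together with an epimorphism $p \lrt m$ (which exists because $\SM$ is generating in $\mmod\La$), the hypothesis $\prj\La \subseteq \SW$ lets this epimorphism factor through the precover $f$, forcing $f$ to be an epimorphism. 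This establishes (a).

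For (b), take an almost-minimal $n$-exact sequence $0 \lrt w \lrt x_n \lrt \cdots \lrt x_1 \lrt w' \lrt 0$ with $w, w' \in \SW$; I must show $x_n \in \SW$. Condition (ii) of Definition \ref{DefWide} supplies a Yoneda equivalent $n$-exact sequence $0 \lrt w \lrt \tilde{w}_n \lrt \cdots \lrt \tilde{w}_1 \lrt w' \lrt 0$ with all terms in $\SW$. Using \cite[Appendix A]{I1} I reduce this second sequence to an almost-minimal one by splitting off contractible direct summands; because $\SW$ is closed under summands all terms remain in $\SW$. As two Yoneda equivalent almost-minimal $n$-exact sequences are isomorphic as complexes, this almost-minimal $\SW$-valued sequence is isomorphic to the original one, so $x_n$ is isomorphic to a term lying in $\SW$, whence $x_n \in \SW$. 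The delicate point here, and the step I expect to be the main obstacle, is exactly this appeal to the uniqueness up to isomorphism of almost-minimal representatives; everything else is formal. With (a) and (b) in hand, Theorem \ref{Wakamatsu} shows that $\SW$ is $n$-special precovering, and Theorem \ref{NspecialNcotor} then completes the proof.
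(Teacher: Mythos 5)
Your overall architecture coincides with the paper's: reduce to showing that $\SW$ is an $n$-special precovering class and invoke Theorem \ref{NspecialNcotor}, obtain surjective $\SW$-covers from the right adjoint, and prove that the resulting $n$-kernel lies in ${\SW\mbox{-}{\rm exact}}_n$ by a Wakamatsu-type argument on the long exact sequence coming from the $n\Z$-property. Your part (a) is fine: the paper simply cites the dual of \cite[Proposition 4.5]{HJV} to get that $\SW$ is covering, and your counit/Krull--Schmidt argument is essentially what lies behind that citation; you even make explicit the surjectivity argument via $\prj\La\subseteq\SW$ that the paper leaves implicit. The genuine divergence is in part (b). The paper never verifies that $\SW$ is left closed under $n$-extensions; instead it cites \cite[Lemma 5.1]{Fe}, whose hypothesis is Fedele's Yoneda-style closure under $n$-extensions --- verbatim condition (ii) of Definition \ref{DefWide} --- and this route needs no uniqueness statement: in the null-homotopy argument one may \emph{choose} the representative of a given class in $\Ext^n_\La(w',w)$, so one picks a representative with all terms in $\SW$ (condition (ii)), splits off contractible summands to make it almost-minimal (terms stay in $\SW$ by summand closure), and runs the argument on that representative; the conclusion, vanishing of the class, is representative-independent.

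Your detour through the literal hypothesis of Theorem \ref{Wakamatsu} instead requires that \emph{every} almost-minimal $n$-exact sequence with end terms in $\SW$ has $x_n\in\SW$, and this is exactly where your proposal is incomplete: the assertion that two Yoneda-equivalent almost-minimal $n$-exact sequences are isomorphic is not contained in \cite[Appendix A]{I1}, which only gives the minimal-plus-contractible decomposition of a single complex, not a comparison across a Yoneda class. The claim is in fact true in the $n\Z$ setting, but it needs an argument, for instance: since $\Om_n w'\in\SM$, every class in $\Ext^n_\La(w',w)$ is an $n$-pushout of the projective sequence $0\lrt \Om_n w'\lrt p_n\lrt\cdots\lrt p_1\lrt w'\lrt 0$, and after adjusting lifts both representatives receive morphisms, fixing both end terms, from one and the same such pushout; moreover any end-fixing morphism of $n$-exact sequences in $\SM$ is a homotopy equivalence, because its mapping cone, after splitting off the identity components at the two ends, is an exact complex of length $n+1$ with terms in $\SM$, hence contractible by dimension shifting and $\Ext^i_\La(\SM,\SM)=0$ for $0<i<n$; then \cite[Appendix A]{I1} together with Krull--Schmidt cancellation of the common end terms yields the isomorphism, and in particular $x_n\in\SW$. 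So your proof is correct in substance but has a hole at precisely the step you flagged; either supply an argument along the above lines, or do as the paper does and apply \cite[Lemma 5.1]{Fe} directly, whose hypothesis is already condition (ii) of wideness, so that no uniqueness of almost-minimal representatives is ever needed.
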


\begin{proof}
Since the inclusion functor $\SW\st{i} \lrt \SM$ admits a right adjoint, by dual of the Proposition 4.5 of \cite{HJV}, we conclude that $\SW$ is a covering subcategory of $\SM$.  In order to show that $\SW$ is an $n$-cotorsion class, we follow Theorem \ref{NspecialNcotor} and show that $\SW$ is an $n$-special precovering class.  Let $m\in\SM$ be given. Since $\SW$ is a covering class, there exists a surjective  $\SW$-cover $\va: w \lrt m$ of $m$. Consider the $n$-exact sequence
\[0 \lrt  r_n \lrt \cdots \lrt r_1 \lrt w \st{\va} \lrt m \lrt 0,\]
where $r_n \lrt \cdots\lrt r_1$ is an $n$-kernel of $\va$ in $\SM$. We should show that $r_n \lrt \cdots \lrt r_1$ lies in ${\SW\mbox{-}{\rm exact}}_n$, i.e. show that for every $w' \in \SW$, there exists an exact sequence
\begin{equation}\label{Exact}
0 \lrt \Ext^n_\La(w', r_n) \lrt \Ext^n_{\La}(w', r_{n-1}) \lrt \cdots \lrt \Ext^n_\La(w', r_1) \lrt 0
\end{equation}
of abelian Ext groups. Since $\SM$ is an $n\Z$-cluster tilting subcategory, there is the following long exact sequence
\begin{align*}
0 \lrt \SM(w', r_n)  \ \lrt  \cdots \lrt \ \SM(w', r_1) \ \ \lrt \  \SM(w', w) \ \  \st{\va^*}\lrt & \ \ \SM(w', m) \ \lrt \\
 \Ext^n_\La(w', r_n) \lrt  \cdots \lrt \Ext^n_\La(w', r_1) \lrt \Ext^n_\La(w', w) \st{\va_*}\lrt  & \ \Ext^n_\La(w', m) \lrt \cdots
\end{align*}
of abelian groups. We get the desired exact sequence \ref{Exact} if we show that ${\va^*}$ is an epimorphism and ${\va_*}$ is a monomorphism. Since $\va$ is a $\SW$-cover, $\va^*$ is surjective. ${\va_*}$ is a monomorphism using \cite[Lemma 5.1]{Fe}, that is, following the same argument as in the last part of the proof of Theorem \ref{Wakamatsu}. This completes the proof.
\end{proof}

Let $F: \SD \lrt \SD'$ be a functor between categories $\SD$ and $\SD'$. Then the essential image of $F$, denoted by $F(\SD)$, is the full subcategory
\[\{ d' \in \SD' \vert \ d' \cong F(d),\ \  \mbox{for some}\ d \in \SD\}\]
of $\SD'$, see e.g. \cite[Definition 3.7]{HJV}.

\begin{definition}\label{MorphismOfNhomological}(\cite[Definition 5.1]{HJV})
Let $(\La, \SM)$ and $(\La', \SM')$ be two $n$-homological pairs. We say that $(\La, \SM)\st{\phi}\lrt (\La', \SM') $ is an epimorphism of $n$-homological pairs if $\La\st{\phi} \lrt \La'$ is an algebra homomorphism such that $\phi_*(\SM') \subseteq \SM$, where $\phi_*:\mmod\La'\lrt \mmod\La$ is the functor given by restriction of scalers, and algebra homomorphism $\phi$ is an epimorphism in the category of rings.
\end{definition}

Let $\phi_*:\mmod\La' \lrt \mmod\La$ be an exact functor. For $n', n'' \in \mmod\La'$ and $j\geq 1$, there exists an induced homomorphism of Yoneda Ext groups,
\[\begin{tikzcd}
\Ext^j_{\La'}(n', n'') \ar{rr}{\phi_*(-)} && \Ext^j_{\La}(\phi_*(n'), \phi_*(n''))
\end{tikzcd}\]
such that $\phi_*(-)$ send the Yoneda equivalence class
\[0 \lrt n'' \lrt u_j \lrt \cdots \lrt u_1 \lrt n' \lrt 0\]
 of $\Ext^j_{\La'}(n', n'')$ to Yoneda equivalence class
\[0 \lrt \phi_*(n'') \lrt \phi_*(u_n) \lrt \cdots \lrt \phi_*(u_1) \lrt \phi_*(n') \lrt 0,\]
of $\Ext^j_\La((\phi_*(n'), \phi_*(n''))$. See \cite[Remark 3.4]{HJV}.\\

\begin{proposition}
Let $(\La, \SM)\st{\phi} \rt (\La' , \SM') $ be an epimorphism of $n$-homological pairs and $\phi_*:\mmod\La' \rt \mmod\La$ be the functor given by restriction of scalers. Assume that
\begin{itemize}
\item[$(i)$] For $m', m''\in\SM'$, the induced homomorphism $\Ext^n_{\La'}(m', m'')\st{\phi_*(-)} \lrt  \Ext^n_\La(\phi_*(m'), \phi_*(m''))$ of Yoneda Ext groups is bijective.
\item[$(ii)$] $\SX'$ is an $n$-cotorsion class in $\SM'$.
\end{itemize}
 Then $\phi_*(\SM')$ is an $n$-abelian category and $\phi_*(\SX')\subseteq \phi_*(\SM')$ is an $n$-cotorsion class.
\end{proposition}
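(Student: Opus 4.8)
The plan is to transport the entire $n$-cotorsion structure from $(\La', \SM')$ to $(\La, \SM)$ along the functor $\phi_*$, using hypothesis $(i)$ (the $\Ext^n$-bijection) as the key dictionary between the two sides. The first task is to verify that $\phi_* \colon \SM' \lrt \phi_*(\SM')$ is an equivalence of $n$-abelian categories, so that $\phi_*(\SM')$ inherits the $n$-abelian structure and the notion of $n$-exact sequences is preserved. Since $\phi$ is an epimorphism of rings, $\phi_*$ is a fully faithful exact functor; combined with the fact that $\phi_*(\SM')\subseteq \SM$ and that $\SM$ is $n$-cluster tilting, one argues that $\phi_*$ carries $n$-exact sequences in $\SM'$ to $n$-exact sequences in $\phi_*(\SM')$ and reflects them. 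The $\Ext^0$ (i.e. Hom) bijectivity is automatic from full faithfulness, and hypothesis $(i)$ gives the $\Ext^n$ bijectivity; these together are exactly what is needed so that a sequence $m_n \lrt \cdots \lrt m_1$ lies in $\SX'\mbox{-}{\rm exact}_n$ if and only if its image lies in $\phi_*(\SX')\mbox{-}{\rm exact}_n$.

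Concretely, I would set $\SX := \phi_*(\SX')$ and $\SW := \phi_*(\SM')$, and prove the two inclusions defining an $n$-cotorsion class. For the easy inclusion $\SX \subseteq {}^{\perp}(\SX\mbox{-}{\rm exact}_n)$, this holds for any full subcategory by the remark following the definition, so nothing is needed. The substance is the reverse inclusion ${}^{\perp}(\SX\mbox{-}{\rm exact}_n) \subseteq \SX$. Given $m = \phi_*(m') \in \SW$ lying in ${}^{\perp}(\SX\mbox{-}{\rm exact}_n)$, I would use the equivalence to pull back the relevant $\Ext^n$-exactness condition to $\SM'$: for every sequence $r_n' \lrt \cdots \lrt r_1'$ in $\SX'\mbox{-}{\rm exact}_n$, its image lies in $\SX\mbox{-}{\rm exact}_n$ (by the preservation established above), so applying the defining property of $m$ and transporting back through the bijection in $(i)$ shows that $m'$ induces an exact $\Ext^n$-sequence against every member of $\SX'\mbox{-}{\rm exact}_n$. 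Hence $m' \in {}^{\perp}(\SX'\mbox{-}{\rm exact}_n) = \SX'$, the last equality being hypothesis $(ii)$. Therefore $m = \phi_*(m') \in \phi_*(\SX') = \SX$, as required.

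The hard part will be making the translation of $\SX\mbox{-}{\rm exact}_n$ and of the perpendicular subcategory fully rigorous across $\phi_*$, in particular handling objects of $\SW = \phi_*(\SM')$ only up to isomorphism and ensuring every sequence in $\SX\mbox{-}{\rm exact}_n$ (defined using $\SX = \phi_*(\SX')$) genuinely arises as $\phi_*$ of a sequence in $\SX'\mbox{-}{\rm exact}_n$. This uses essential surjectivity of $\phi_*$ onto its essential image together with full faithfulness, but one must be careful that the $\Ext^n$-bijection of $(i)$ commutes with the connecting maps in the long exact sequences coming from $n$-exact sequences; this compatibility is precisely what the Yoneda description of $\phi_*(-)$ on $\Ext$ groups (recalled just before the statement, from \cite[Remark 3.4]{HJV}) provides. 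Once that naturality is in hand, the two inclusions follow formally and the proof closes.
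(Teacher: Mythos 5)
Your proposal is correct, and the core of your argument for the cotorsion-class statement is the same as the paper's: given $m=\phi_*(m')$ in ${}^{\perp}(\phi_*(\SX')\mbox{-}{\rm exact}_n)$, transport the $\Ext^n$-exactness conditions back along the bijection of hypothesis $(i)$, conclude $m'\in{}^{\perp}(\SX'\mbox{-}{\rm exact}_n)=\SX'$ by hypothesis $(ii)$, and push forward. You differ from the paper on two points, both to your credit or at least harmlessly. First, for the claim that $\phi_*(\SM')$ is $n$-abelian, the paper invokes the wide-subcategory machinery of Herschend--J{\o}rgensen--Vaso: full faithfulness of $\phi_*$ (their Proposition 5.6, from $\phi$ being a ring epimorphism) together with hypothesis $(i)$ are exactly conditions (a) and (b) of their Theorem 3.8, so $\phi_*(\SM')$ is a wide subcategory of $\SM$, hence $n$-abelian by their Proposition 4.2. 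You instead observe that $\phi_*\colon\SM'\lrt\phi_*(\SM')$ is an equivalence and that being $n$-abelian is equivalence-invariant; this is more elementary and suffices for the statement as phrased, though the paper's route yields the stronger structural fact that $\phi_*(\SM')$ sits inside $\SM$ as a wide subcategory, with $n$-kernels and $n$-cokernels computable there. Second, you explicitly flag and discharge two points the paper's proof leaves implicit: that every sequence in $\SX'\mbox{-}{\rm exact}_n$ maps into $\phi_*(\SX')\mbox{-}{\rm exact}_n$ (needed before one can quote $(ii)$, and itself a consequence of $(i)$ applied to each $x'\in\SX'$), and that the bijection of $(i)$ is natural, i.e.\ compatible with the maps $\Ext^n_{\La'}(m',m'_i)\lrt\Ext^n_{\La'}(m',m'_{i-1})$, which follows from the Yoneda description of $\phi_*(-)$; without this naturality, exactness would not transfer between the two rows. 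Your extra care here fills genuine ellipses in the published argument rather than deviating from it.
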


\begin{proof}
Since $\phi$ is an epimorphism of $n$-homological pair, by \cite[Proposition 5.6]{HJV}, the induced functor $\SM' \st{\phi_*}\lrt \SM$ is full and faithful. So $\phi_*$ satisfies the condition $(a)$ of Theorem 3.8 of \cite{HJV}. Condition $(i)$ is just part $(b)$ of  \cite[Theorem 3.8]{HJV}. Hence the essential image $\phi_*(\SM')$ is a wide subcategory of $\SM$.
So by  Proposition 4.2 of \cite{HJV} $\phi_*(\SM')$ is an $n$-abelian category. To show that $\phi_*(\SX')\subseteq\phi_*(\SM')$ is an $n$-cotorsion class, by definition, we need to show the validity of the inclusion ${}^{\perp}({{\phi_*(\SX')\mbox{-}{\rm exact}}_n})\subseteq \phi_*(\SX') $.
Let $m\in {}^{\perp}({{\phi_*(\SX')\mbox{-}{\rm exact}}_n})$, where orthogonal is taken in $\phi_*(\SM')$.  We have $m=\phi_*(m')$, for some $m' \in \SM'$. So for every sequence 
\[\phi_*(m'_n) \lrt \cdots \lrt \phi_*(m'_1)\] in $\phi_*(\SX')\mbox{-}{\rm exact}_n,$
there is exact sequence
\[0 \lrt \Ext^n_\La(\phi_*(m'),\phi_*(m'_n)) \lrt \cdots \lrt \Ext^n_\La(\phi_*(m'), \phi_*(m'_1)) \lrt 0.\]
Hence, condition $(i)$ implies that the sequence
\[0 \lrt \Ext^n_{\La'}(m', m'_n) \lrt \cdots \lrt \Ext^n_{\La'}(m', m'_1) \lrt 0\]
is also exact. Since $\SX'$ is an $n$-cotorsion class, we get $m' \in\SX'$. Hence $m=\phi_*(m')\in\phi_*(\SX')$. This completes the proof.
\end{proof}

 \section*{Acknowledgments}
The authors would like to express their special thanks to Professor Peter J{\o}rgensen for constant encouragement during this project, in particular, for suggesting this notion of $n$-cotorsion classes. This research is in part supported by a grant from IPM.

\end{document}